\newtheorem{theorem}{Theorem}[section]
\newtheorem{prop}[theorem]{Proposition}
\title{Poncelet property of planar elliptic integrable Kepler billiards}
\author[1]{Daniel Jaud\footnote{Corresponding author}}
\affil[1]{\footnotesize Gymnasium Holzkirchen, Germany, Daniel.Jaud.PhD@gmail.com}
\author[2]{Lei Zhao}
\affil[2]{\footnotesize School of Mathematical Sciences, Dalian University of Technology, zhao1899@dlut.edu.cn}
\date{}
\begin{document}
\maketitle
\flushbottom


\begin{abstract}
\noindent
We consider the integrable dynamics of a Kepler billiard in the plane bounded by a branch of a conic section with one focus at the Kepler center. We show that in this case, for negative energy orbits associated with trajectories being ellipses, the lines of consecutive second orbital foci along a billiard trajectory are all tangent to a fixed circle. Based on this observation, we analyze in detail the integrable dynamics of a planar Kepler billiard, both inside and outside an elliptic reflection wall, with the Kepler center at one of its foci.  We identify the associated elliptic curve on which the dynamics are linearized, and the shift defined thereon. We also discuss explicit conditions on $n$-periodicity using Cayley's criteria. 

\end{abstract}

\vspace*{0.4cm}
{\textbf{Keywords:} billiards,  central potential, Kepler problem, Poncelet porism}

\vspace*{0.4cm}
{\textbf{MSC-Classification:} 14H70, 37C79, 37J99, 37N05}


\section{Introduction}

The classical Poncelet theorem asserts that in the plane, given a pair of conic sections, a polygon inscribed in one conic section and circumscribing another always appears in an infinite family of such polygons \cite{Poncelet}. A modern review by Griffiths and Harris \cite{GH} explains that this system is ultimately linearized on an elliptic curve with a shift of finite order defined thereon. Starting from any point on the elliptic curve, we obtain, by successive applications of the shift, an orbit of finite order in the complex domain. When the entire orbit is real, the shift is real, and this corresponds to a polygon with the prescribed property. Shifting the initial point in the real part, we get an infinite family of polygons with the same property. The linkbetweene Poncelet porismando billiard dynamics is described i  \cite{Tabachnikov}.

Billiard systems in the plane are intensively studied in the research field of Dynamical Systems, both with and without a potential. Recently, many studies on the integrability and dynamics of billiards in a Kepler-Coulomb potential, often referred to as Kepler billiards, have been carried out. In \cite{Felder}, Felder integrated out the planar Kepler billiard with a line as the wall of reflection, whose integrability is due to Gallavotti and Jauslin \cite{GJ}, in terms of elliptic curves with a direct computation. Some conditions on periodic points have been computed in this work as well. Conditions on $n$-periodicity as well as the topology of such a system have been subsequently studied in \cite{GR}. Many more integrable Kepler billiards in the plane are pointed out in \cite{TZ1, TZ2}. 

This article aims to initiate an analysis on the dynamics of integrable Kepler billiard systems with a branch of a (non-degenerate) conic section as a reflection wall in the plane. In this system, the particle moves on one side of the reflection wall in a Kepler potential field whose center is at a focus of the conic section. As first observed in \cite{GJ} for the line case and later also in the cases with elliptic or hyperbolic reflection walls \cite{TZ1, TZ2, JZ}, along a billiard trajectory, the second orbital foci all lie on a circle centered at the second focus of the reflection wall. 

In the case of a line as a reflection wall, Felder \cite{Felder} has analyzed the integrable dynamics via the use of elliptic curves and elliptic functions. Analogously, it is natural to expect a link between the integrable dynamics of these billiard-type systems and elliptic curves. It is theoretically possible to identify this elliptic curve by directly parametrizing the elliptic boundary and the (elliptic) Keplerian orbits. We nevertheless adapt a different way based on a simple but for us unexpected geometric fact: Along a Kepler billiard orbit, the lines containing consecutive second orbital foci are all tangent to a circle, which we call the \emph{foci-caustic circle}. By considering how the second orbital foci is iterated, we reduce the analysis into the much more familiar, ``standard'' situation of the Poncelet porism. In this situation, the underlying elliptic curve, the shift, and the condition for $n$-periodicity are discussed in \cite{GH, GH2} in detail. An application of these classical works yields our desired results. We carry out the detailed analysis in this article for the Kepler problem with negative energy, where the point-like particle is reflected along an elliptic boundary. We leave the detailed analysis of several other cases for our future work.

In a recent preprint \cite{BBBT}, the authors showed in particular that at high energy, the only integrable Kepler billiards inside an ellipse are those with the Kepler center getting placed at a focus. Through the Hooke-Kepler correspondence, the billiard systems under our study are also directly related to the centered Hooke billiard inside an ellipse, for which some analysis is available \cite{Barrera, Panov, Fedorov}.

We organize this article as follows: In Section \ref{sec:Setup}, we set up our systems under study and discuss the foci-circle, a notion first introduced in \cite{J}. 
In Section \ref{sec: Foci} we analyse the geometric properties of the iterated second foci and show the existence of the foci-caustic circle. The billiard dynamics is now reduced to the dynamics of line segments tangent to a circle with end-points in another circle. The associated elliptic curve, the corresponding shift, and the relevant real component are computed in Section \ref{Sec: Elliptic Curve}. Conditions on $n$-periodicity are computed in \ref{Sec: Periodicity}, in which we also include some explicit discussions for small $n$.

\section{Integrable Kepler billiards and some of their geometrical properties}\label{sec:Setup}

We consider a billiard system in the plane $\mathbb{R}^{2}$, for which the particle moves under the influence of a Kepler(-Coulomb) potential field and gets reflected at a branch of a conic section $\mathbf{K}$ with a focus at the Kepler center $F$.  Let $F'$ be the second focus of the conic section boundary: This leaves out only the case of a parabola 
but includes the two important degenerate cases of a circle for which $F=F'$ and a line for which $F'$ is mirrored from $F$ through the line (the case implicitly investigated by Boltzmann \cite{B}). 

We set $F'=O$ as the origin of our coordinate system. We put  the Kepler center at $F=(-2 c_{\mathbf{K}}, 0)$, so $c_{\mathbf{K}} \ge 0$ is the linear eccentricity. The semi major axis of $\mathbf{K}$ is denoted by $a_{\mathbf{K}}$. The eccentricity of $\mathbf{K}$ is $e_{\mathbf{K}}=c_{\mathbf{K}}/a_{\mathbf{K}}$. The degenerate case of a circle and a line corresponds respectively to $c_{\mathbf{K}}=0$ and $a_{\mathbf{K}}=\infty$, two cases of no eccentricity.

The Keplerian orbits of the underlying system are conic section branches with a focus at $F$. At elastic reflections, the total energy is preserved, which gives us a conserved quantity of the system. We consider the cases where the total energy $E$ is non-zero. The orbital semi major axis is thus well-defined and is invariant under reflections. We denote it by $a$. Note that $a$ is related to the energy via $E=-\frac{\mu}{2a}$ with $\mu$ being the coupling constant of the gravitational field with $\mu<0$ for elliptical orbits and $\mu>0$ for hyperbolic orbits. The orbit is completely determined by its second focus up to orientation. Thus, to a billiard trajectory of Keplerian arcs reflecting at $\mathbf{K}$, there is an associated sequence of second foci $F_{1} F_{2} \dots $, which depends on the Kepler billiard trajectory and can be finite or infinite.

A major geometrical fact on the integrability of this mechanical billiard system is the following, which is established in \cite{GJ, Felder, TZ1, TZ2}:

\begin{prop} The sequence of second foci associated with a Kepler billiard trajectory lies on a circle centered at $F'$ in the elliptic/hyperbolic boundary cases.
\end{prop}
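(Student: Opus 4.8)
The plan is to exploit the defining optical/focal property of conic sections together with the fact that the reflection wall $\mathbf{K}$ is itself a conic with a focus at the Kepler center $F$. Fix a billiard trajectory and consider a reflection point $P \in \mathbf{K}$, with incoming Keplerian arc having second focus $F_i$ and outgoing arc having second focus $F_{i+1}$. Since both arcs pass through $P$ and have the common focus $F$ and the common semi-major axis $a$, the reflection law for the billiard must be compatible with the reflection law for the Keplerian arcs, and the key elementary fact I would use is: for a conic with foci $F, F_i$, the tangent line to the conic at $P$ bisects the angle between the focal rays $PF$ and $PF_i$ (externally for an ellipse, appropriately for a hyperbola). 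The billiard reflection says precisely that the incoming and outgoing velocity directions make equal angles with the tangent to $\mathbf{K}$ at $P$; but the velocity directions are tangent to the respective Keplerian arcs at $P$.

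First I would set up the angle bookkeeping carefully: let $\tau$ be the tangent direction to $\mathbf{K}$ at $P$, and let $\nu$ be the tangent direction to $\mathbf{K}$'s own confocal structure — more usefully, recall that $\mathbf{K}$ has foci $F$ and $F'$, so the tangent $\tau$ to $\mathbf{K}$ at $P$ bisects the angle between $PF$ and $PF'$. Next, the tangent to the incoming Kepler arc at $P$ bisects the angle between $PF$ and $PF_i$, and similarly the tangent to the outgoing arc bisects the angle between $PF$ and $PF_{i+1}$. Combining the billiard reflection condition (equal angles of the two Kepler-arc tangents with $\tau$) with these three angle-bisector statements, a short angle-chase should show that $PF_{i+1}$ is the reflection of $PF_i$ across the line $PF'$ — equivalently, that $F'$ lies on the angle bisector of $\angle F_i P F_{i+1}$ — and moreover that $|PF_i| = |PF_{i+1}|$ is forced by the constant semi-major axis (since $|PF| + |PF_i| = 2a$ or $||PF| - |PF_i|| = 2a$ depending on the branch, and the same for $F_{i+1}$).

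From $|PF_i| = |PF_{i+1}|$ together with the fact that $F'$ lies on the bisector of $\angle F_i P F_{i+1}$, the triangles $F' P F_i$ and $F' P F_{i+1}$ are congruent, hence $|F' F_i| = |F' F_{i+1}|$. Since this holds at every reflection, the quantity $|F' F_j|$ is the same for all $j$, i.e.\ all the second foci $F_1, F_2, \dots$ lie on a common circle centered at $F'$. I would then remark that the degenerate cases are recovered in the limit: for $\mathbf{K}$ a circle, $F = F'$, and the statement becomes the familiar fact that the second foci lie on a circle about the Kepler center; for $\mathbf{K}$ a line, $F'$ is the mirror image of $F$, and one recovers Boltzmann's observation.

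The main obstacle I expect is getting the angle-chase uniformly correct across the different geometric configurations — interior versus exterior reflections, ellipse versus hyperbola branches for both $\mathbf{K}$ and the Keplerian arcs, and the distinction between the internal and external angle bisectors in the optical property (an ellipse's tangent is the external bisector of the focal rays, a hyperbola's the internal one). Rather than handling each case by a separate figure, I would prefer to encode everything analytically: parametrize $P$ on $\mathbf{K}$, write the unit focal directions, express the reflection of a direction across a line, and verify the identity $|F'F_i| = |F'F_{i+1}|$ by a direct (if slightly tedious) computation that is manifestly case-independent. This also has the advantage of making transparent the formula for the radius of the foci-circle, which will be needed in the later sections. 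Alternatively one can cite \cite{GJ, Felder, TZ1, TZ2} where this is established, and only sketch the unified argument; but since the foci-circle and its radius are used quantitatively below, I would include the computational proof.
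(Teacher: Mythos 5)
Your proposal is essentially correct, but it is worth noting that the paper does not prove this proposition at all: it is stated as a known fact and attributed to the literature (Gallavotti--Jauslin for the line case, and the later works of Felder and Takeuchi--Zhao for conic walls), and the only proof the paper supplies in this section is for the separate parabolic-boundary statement. What you reconstruct is, in substance, the standard argument underlying those references, and it is the same geometric fact the paper itself invokes later (in the proof of the foci-caustic theorem it cites \cite[Thm 2]{JZ} for ``$F_1, F_2$ are symmetric with respect to $F'P_1$''): the optical property of the wall $\mathbf{K}$ and of the two Keplerian arcs, combined with the billiard reflection law, shows that the reflection across the line $PF'$ carries the ray $PF_i$ to the ray $PF_{i+1}$, while conservation of the energy (hence of $a$) together with the common focal distance $|PF|$ forces $|PF_i|=|PF_{i+1}|$; the reflection across $PF'$ then carries $F_i$ to $F_{i+1}$, so $|F'F_i|=|F'F_{i+1}|$ is invariant. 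Your angle chase does close up (with directed angles measured from the tangent of $\mathbf{K}$ at $P$, the ray $PF_{i+1}$ is exactly the mirror of $PF_i$ in the line $PF'$), and your cautions about branch and bisector conventions are the right ones; the hyperbolic cases cause no trouble because the orbit type (enclosing $F$ or $F'$) is fixed by the sign of the Kepler constant and hence unchanged at reflections, so the focal-distance relation is the same before and after. What your self-contained route buys is a uniform proof and, as you note, explicit access to the radius $R$; what the paper's route buys is economy, since the fact is established elsewhere and the quantitative information about $R$ is not needed until later. Either including your computation or simply citing \cite{GJ, Felder, TZ1, TZ2, JZ}, as the paper does, would be acceptable.
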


The circle is called the \emph{foci-circle} of the system, and is denoted by $S(F', R)$ for which $R >0$ is the radius \cite{JZ}. The quantity $R$ is another conserved quantity of the system. For a graphical representation of the setup, see Fig. \ref{fig_setup}.

A similar result holds for some specific limit of a parabolic boundary. We will not address this case in this article and will leave it for future work. 

\begin{figure}[htb]
\centering
\includegraphics[scale=0.9]{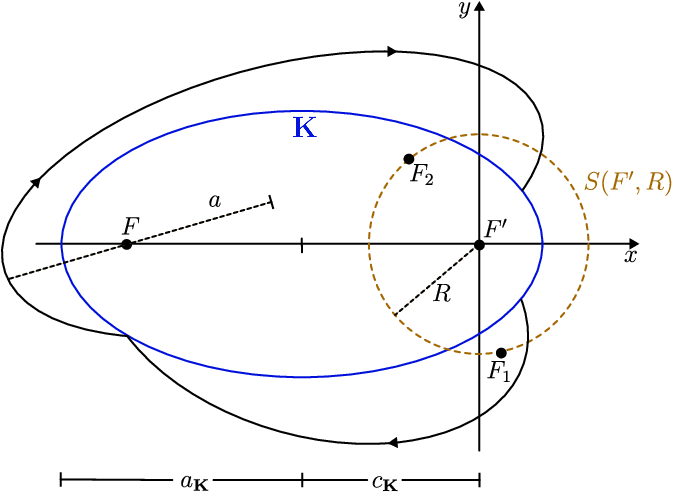}
\caption{Setup of the system with elliptic boundary and foci-circle on which all consecutive second Kepler foci lie.}
\label{fig_setup}
\end{figure}

\section{The foci-chords and the foci-caustic circle}\label{sec: Foci}

\subsection{The Foci-Caustic Circle}

Associated to a foci sequence $F_{1} F_{2} \dots$ there is a sequence of line segments $F_{1} F_{2}, F_{2} F_{3}, \dots$ that we call the \emph{foci-chords}. The lines containing them are denoted by $\ell(F_{i} F_{i+1}), i=1,2,3\dots$ respectively.
We have

\begin{theorem} Let the reflection wall be an ellipse or a branch of a hyperbola with one focus $F$ identical to the center of the Kepler force. Then along a Kepler billiard trajectory, the iterated line of foci-chords $\ell(F_{1} F_{2}), \ell(F_{2} F_{3}), \dots$ are tangent to the circle $S( x_{0}, |r_{0}|)$, called the \emph{foci-caustic circle}, in which $x_{0}=x_{0}(a, R)$ and $r_{0}=r_{0}(a, R)$ are given by quadratic functions in terms of the semi major axis $a$ of the Kepler orbits and the radius $R$ of the foci-circle.
\end{theorem}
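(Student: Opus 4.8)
The plan is to reduce the billiard reflection to an elementary reflection of \emph{points}, and then to read off the common tangent circle of the foci-chords from essentially a one-line computation. The key preliminary observation is a \emph{reflection law for second foci}: if $P\in\mathbf{K}$ is a reflection point and $F_i,F_{i+1}$ are the second foci of the incoming and outgoing Keplerian arcs, then $F_{i+1}$ is the mirror image of $F_i$ in the line $\ell(PF')$. Indeed, the two arcs share the force centre $F$ and, energy being conserved, the Kepler semi major axis $a$, so the string property of conics gives $|PF|+|PF_i|=2a=|PF|+|PF_{i+1}|$ in the elliptic case (and the analogous identity in the hyperbolic case), whence $|PF_i|=|PF_{i+1}|$; together with $|F'F_i|=|F'F_{i+1}|=R$ from the foci-circle property of Section~\ref{sec:Setup}, this puts both $P$ and $F'$ on the perpendicular bisector of $F_iF_{i+1}$. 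Away from the degenerate configuration $F_i=F_{i+1}$ (or $P=F'$) that bisector is precisely $\ell(PF')$, so $\ell(F_iF_{i+1})$ is perpendicular to $\ell(PF')$ and meets it at the midpoint $M$ of $F_iF_{i+1}$, which is the orthogonal projection of $F_i$ onto $\ell(PF')$.

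With $F'=O$ and $F=(-2c_{\mathbf{K}},0)$ as in Section~\ref{sec:Setup}, write $\hat u(\phi)=(\cos\phi,\sin\phi)$ for the direction from $F'$ to $P$, so $P=d\,\hat u(\phi)$ with $d=|PF'|$, and $F_i=R\,\hat v(\psi)$ with $\hat v(\psi)=(\cos\psi,\sin\psi)$. Then $M=(F_i\cdot\hat u(\phi))\,\hat u(\phi)$ and the reflection law gives
\[
\ell(F_iF_{i+1}) = \bigl\{\, X : X\cdot\hat u(\phi) = R\cos(\psi-\phi) \,\bigr\}.
\]
The crux is to express $R\cos(\psi-\phi)$ through $\phi$ alone, using that $P$ lies on both $\mathbf{K}$ and the incoming Keplerian arc. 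Subtracting the string identity of the arc from that of the boundary conic gives $|PF_i|=\varepsilon d+\kappa$ with $\varepsilon=\pm1$ and $\kappa$ an explicit affine function of $a$ and $a_{\mathbf{K}}$ (for instance $\varepsilon=1$, $\kappa=2(a-a_{\mathbf{K}})$ for an elliptic wall and elliptic arcs). Squaring and comparing with $|PF_i|^2=|P-F_i|^2=d^2+R^2-2dR\cos(\psi-\phi)$ makes the $d^2$ cancel and yields
\[
R\cos(\psi-\phi) = \frac{R^2-\kappa^2}{2d} - \varepsilon\kappa .
\]
Inserting the focal conic equation $d^{-1}=A+B\cos\phi$ of $\mathbf{K}$ (with $A=1/p_{\mathbf{K}}$, $B=e_{\mathbf{K}}/p_{\mathbf{K}}$, $p_{\mathbf{K}}=a_{\mathbf{K}}(1-e_{\mathbf{K}}^2)$ in the elliptic-wall case) turns the right-hand side into $m_0+m_1\cos\phi$ with $m_1=\tfrac12(R^2-\kappa^2)B$ and $m_0=\tfrac12(R^2-\kappa^2)A-\varepsilon\kappa$, both quadratic polynomials in $a$ and $R$. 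Hence $\ell(F_iF_{i+1})$ is the line $(X_1-m_1)\cos\phi+X_2\sin\phi=m_0$, whose distance to the fixed point $(m_1,0)$ equals $|m_0|$ for \emph{every} $\phi$. Therefore all foci-chord lines are tangent to $S(x_0,|r_0|)$ with $x_0=(m_1,0)$ on the axis $FF'$ and $r_0=m_0$, which is the claim.

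The conceptual content — the reflection law and the elimination of $\psi$ — is short; I expect the main work to be bookkeeping. One must fix the sign $\varepsilon$ and the constant $\kappa$ consistently across the four combinations (elliptic/hyperbolic reflection wall $\times$ elliptic/hyperbolic Keplerian arc), verify that the reflection point $P$ lies on the relevant branch so that the focal conic equation applies with the correct orientation, and dispose of the degenerate positions — $F_i=F_{i+1}$, $R^2=\kappa^2$ (the caustic circle shrinking to a point, located at $F'$), and Keplerian arcs tangent to $\mathbf{K}$ — by a continuity argument. This sign/case bookkeeping is the only genuine obstacle I anticipate.
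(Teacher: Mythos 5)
Your proof is correct and takes essentially the same route as the paper's: both rest on the fact that $F_{i+1}$ is the mirror image of $F_{i}$ across the line $F'P$, compute the chord line's signed distance to $F'$ from the string-property relations (your law-of-cosines identity $R\cos(\psi-\phi)=\tfrac{R^{2}-\kappa^{2}}{2d}-\varepsilon\kappa$ is exactly the paper's $k=(C_{1}^{2}-B_{1}^{2}-R^{2})/(2B_{1})$ in disguise), and then substitute the focal polar equation of $\mathbf{K}$ to rewrite the line as $(x-x_{0})\cos\phi+y\sin\phi+r_{0}=0$ with $\phi$-independent $x_{0},r_{0}$. The only differences are cosmetic: you prove the mirror-symmetry of consecutive second foci directly from $|PF_{i}|=|PF_{i+1}|$ and the foci-circle property rather than citing \cite{JZ}, and you compress the paper's nine-case sign bookkeeping into the pair $(\varepsilon,\kappa)$.
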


For illustration, see Fig. \ref{fig:3_1}.
\begin{figure}[htb]
\centering
\includegraphics[scale=0.9]{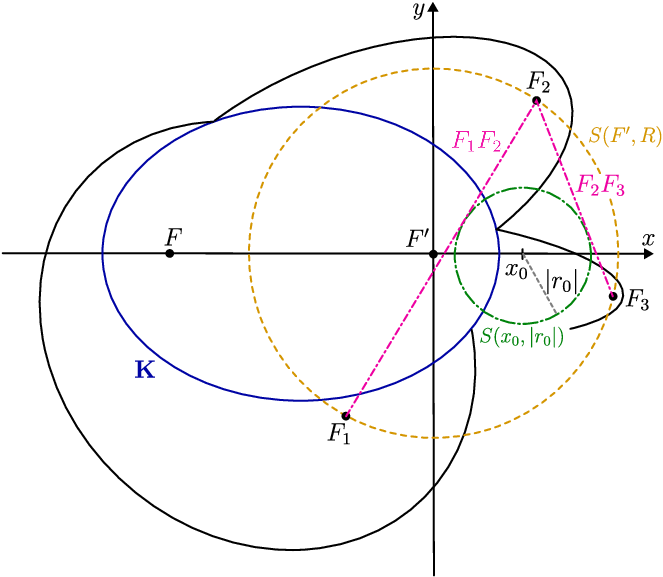}
\caption{Foci-chords tangent to the foci-caustic circle.}
\label{fig:3_1}
\end{figure}

Thus, the iterated foci-chords satisfy the usual setting of the Poncelet porism: The line containing the foci-chords are all tangent to the foci-caustic circle and have their ends in the foci-circle. 

To proceed, we distinguish cases: for the reflection boundary, we write $(\mathbf{E}), (\mathbf{H}_{F'}), (\mathbf{H}_{F})$ respectively for the cases of elliptic boundary, hyperbolic branch enclosing $F'$, and hyperbolic branch enclosing $F$. We also have respectively  $(\mathcal{E}), (\mathcal{H}_{F'}), (\mathcal{H}_{F})$ for orbit types: elliptic orbits, hyperbolic orbits enclosing $F'$, and hyperbolic orbits enclosing $F$. Note that hyperbolic orbits enclosing the empty focus $F'$ appear when the Kepler-Coulomb center is repulsive. 


\begin{proof} We consider the case that an orbit with second focus $F_{1}$ is reflected to an orbit with second focus $F_{2}$ at the point $P_{1} \in \mathbf{K}$, where the boundary $\mathbf{K}$ is an ellipse of a hyperbolic branch.

We set 
$$|FP_{1}|=A_{1}, \quad |F' P_{1}|=B_{1}, \quad |F_{1} P_{1}|=C_{1}^{+}, \quad |F_{2} P_{1}|=C_{1}^{-}.$$

Depending on the type of $\mathbf{K}$ we have by its two-foci definition
$$(\mathbf{E}): A_{1} + B_{1}=2 a_{\mathbf{K}};$$
$$(\mathbf{H}_{F'}): A_{1} - B_{1}=2 a_{\mathbf{K}};$$
$$ (\mathbf{H}_{F}): B_{1} - A_{1}=2 a_{\mathbf{K}}.$$

Then, for the orbits we have as well
$$(\mathcal{E}): A_{1} + C_{1}^{+}=A_{1} + C_{1}^{-}= 2 a;$$
$$(\mathcal{H}_{F'}): A_{1} - C_{1}^{-}=A_{1} - C_{2}= 2 a ;$$
$$(\mathcal{H}_{F}): A_{1} - C_{1}^{+}=A_{1} - C_{1}^{-}= -2 a.$$

In all cases
$$C_{1}^{+}=C_{1}^{-}:=C_{1},$$
and we obtain the following relationships in different cases:
$$(\mathbf{E}, \mathcal{E}): C_{1}-B_{1}= 2 (a-a_{\mathbf{K}});$$
$$(\mathbf{E}, \mathcal{H}_{F'}): C_{1} + B_{1}= 2 (a_{\mathbf{K}}-a);$$
$$(\mathbf{E}, \mathcal{F}): C_{1}+B_{1}= 2 (a+a_{\mathbf{K}});$$
$$(\mathbf{H}_{F'}, \mathcal{E}): C_{1}+B_{1}= 2 (a-a_{\mathbf{K}});$$
$$(\mathbf{H}_{F'}, \mathcal{H}_{F'}): C_{1}-B_{1}= 2 (a_{\mathbf{K}}-a);$$
$$(\mathbf{H}_{F'}, \mathcal{H}_{F}): C_{1}-B_{1}= 2 (a_{\mathbf{K}}+a);$$
$$(\mathbf{H}_{F}, \mathcal{E}): C_{1}+B_{1}= 2 (a+a_{\mathbf{K}});$$
$$(\mathbf{H}_{F}, \mathcal{H}_{F'}): B_{1}-C_{1}= 2 (a_{\mathbf{K}}+a);$$
$$(\mathbf{H}_{F}, \mathcal{H}_{F}): B_{1}-C_{1}= 2 (a_{\mathbf{K}}-a).$$

To continue, we denote the polar angle of $P_1$ by $\varphi$. The direction $F' P_1$ is thus given by the unit vector $(\cos \varphi, \sin \varphi)$. As mentioned in the proof of  \cite[Thm 2]{JZ}, $F_{1}, F_{2}$ are symmetric with respect to $F' P_1$. Thus the line $\ell(F_{1} F_{2})$ is perpendicular to $F' P_1$ and is given by an equation of the form
$$x\cos \varphi +y\sin \varphi +k=0,$$
which has signed distance $k$ to $F$.

 Write $h=|F_{1} F_{2}|/2$. By the construction of the foci-circle, we have
\begin{equation}\label{eq: BC}
{k^{2}+h^{2}=R^{2}.}
\end{equation}

Applying Pythagorean theorem to $P_{1}$, $F_{1}$ and the middle point 

\noindent
${S=(-k\cos(\varphi),-k\sin(\varphi))}$ of $F_{1} F_{2}$, {we have the formula}
\begin{equation}\label{eq: BCh}
{(B_{1} +k)^{2}+h^{2}=C_{1}^{2}.}
\end{equation}

{We thus get  from \eqref{eq: BC} and \eqref{eq: BCh} that
$$k=\dfrac{C_{1}^{2}-B_{1}^{2}-R^{2}}{2 B_{1}}.$$}





In any of the nine cases listed above, we have
$$C_{1}^{2}-B_{1}^{2}-R^{2}=(C_{1}-B_{1}) (C_{1}+B_{1})-R^{2}$$
$$=2 (\pm a_{\mathbf{K}} \pm a) \cdot (2 (\pm a_{\mathbf{K}} \pm a) \pm 2 B_{1})-R^{2}= 4 (a_{\mathbf{K}} \pm a)^{2}-R^{2} \pm 4 (a_{\mathbf{K}} \pm a) B_{1},$$
with a corresponding choice of signs, with the note that the signs are chosen in the same way in the expressions $(\pm a_{\mathbf{K}} \pm a)$.

We find
$$ k = \dfrac{4 (a_{\mathbf{K}} \pm a)^{2}-R^{2}}{2 B_{1}} \pm 2 (a_{\mathbf{K}} \pm a). $$

As $P_{1}$ lies on $\mathbf{K}$ we have 

$$B_{1}=\frac{a_{\mathbf{K}}^2\pm c_{\mathbf{K}}^2}{a_{\mathbf{K}} \pm c_{\mathbf{K}} \cdot \cos\varphi},$$

in which the sign in the numerator is $+$ when $\mathbf{K}$ is a hyperbolic branch, and is $-$ when $\mathbf{K}$ is an ellipse; the sign in the denominator is $-$ for $\mathbf{H}_{F}$ and is $+$ for the other two cases.

With this, we can now write
$$ k = \dfrac{(4 (a_{\mathbf{K}} \pm a)^{2}-R^{2})(a_{\mathbf{K}} \pm c_{\mathbf{K}} \cdot \cos \varphi)}{2 (a_{\mathbf{K}}^2\pm c_{\mathbf{K}}^2)} \pm 2 (a_{\mathbf{K}} \pm a) $$
and thus the equation of the line $\ell(F_{1} F_{2})$ can be written as
$$\Bigl(x {+} \dfrac{(4 (a_{\mathbf{K}} \pm a)^{2}-R^{2}) c_{\mathbf{K}}}{2 (a_{\mathbf{K}}^2\pm c_{\mathbf{K}}^2)} \Bigr) \cos \varphi +y \, \sin \varphi + \dfrac{a_{\mathbf{K}} (4 (a_{\mathbf{K}} \pm a)^{2}-R^{2})  }{2 (a_{\mathbf{K}}^2\pm c_{\mathbf{K}}^2)} \pm 2 (a_{\mathbf{K}} \pm a)=0,$$
in which the choice of the sign is the same in the expressions $(a_{\mathbf{K}} \pm a)$.

Now, if we write the above equation as 
$$(x - x_{0}) \cos \varphi + y \sin \varphi+r_{0}=0$$
then this line, and as well as any of the other lines $\ell(F_{i} F_{i+1}) \dots$, is tangent to the circle $S(x_{0}, r_{0})$ centered at $x_{0}$ with radius $|r_{0}|$, for which both $x_{0}$ and $r_{0}$ are uniquely determined by the reflection boundary $\mathbf{K}$ and the fixed conserved quantities $(a, R)$.  


\end{proof}



\subsection{The Foci and Foci-caustic Circles: The $(\mathbf{E}, \mathcal{E})$ Case}

We consider the $(\mathbf{E}, \mathcal{E})$ case in more detail. Recall that distinct ellipses with a focus in common intersect at at most two points. Thus, the particle may move either inside or outside $\mathbf{E}$ and leads to the same dynamics up to a time reversal \cite{JZ}.

\subsubsection{The Expressions of $x_{0}, r_{0}$ and the Parameter Range}

We have 
\begin{equation}\label{eq: x_{0} r_{0} EE }
x_{0}=-\dfrac{(4 (a-a_{\mathbf{K}})^{2} -R^{2}) c_{\mathbf{K}}}{2 (a_{\mathbf{K}}^2-c_{\mathbf{K}}^2)}, \, ~r_{0}=\dfrac{(4 a_{\mathbf{K}} (a-a_{\mathbf{K}})^{2} -a_{\mathbf{K}} R^{2}) }{2 (a_{\mathbf{K}}^2-c_{\mathbf{K}}^2)}+2 (a-a_{\mathbf{K}}).
\end{equation}


Once the elliptic boundary $\mathbf{K}$ of type $(\mathbf{E})$ is given, we have

\begin{prop}\label{prop: admissible EE} For the Kepler ellipses to perform physical reflections with the elliptical boundary, the conserved quantities $(a,  R) \in \mathbb{R}^{2}_{+}$ have to satisfy
$$2 a>a_{\mathbf{K}}-c_{\mathbf{K}}>0,\,\, |2 a_{\mathbf{K}}-2 a| \le R \le 2 a + 2 c_{\mathbf{K}}.$$
\end{prop}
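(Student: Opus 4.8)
The plan is to derive the stated inequalities from the geometric constraints that make a Kepler billiard configuration in the $(\mathbf{E},\mathcal{E})$ case genuinely realizable: the reflection point $P_1$ must lie on the ellipse $\mathbf{K}$, the Kepler orbit through $P_1$ must be a bona fide ellipse passing through $P_1$ with $F$ as a focus, and the second focus $F_1$ must sit on the foci-circle $S(F',R)$. First I would record the range of the focal radius $B_1 = |F'P_1|$ as $P_1$ varies over $\mathbf{K}$: since $B_1 = (a_{\mathbf{K}}^2 - c_{\mathbf{K}}^2)/(a_{\mathbf{K}} + c_{\mathbf{K}}\cos\varphi)$, it runs over the closed interval $[a_{\mathbf{K}} - c_{\mathbf{K}},\, a_{\mathbf{K}} + c_{\mathbf{K}}]$. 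The distance from $P_1$ to the Kepler center is then $A_1 = 2a_{\mathbf{K}} - B_1 \in [a_{\mathbf{K}} - c_{\mathbf{K}},\, a_{\mathbf{K}} + c_{\mathbf{K}}]$ as well.

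The next step is to impose that a Keplerian ellipse of semi-major axis $a$ with focus $F$ actually passes through $P_1$. For such an ellipse the focal distance $A_1 = |FP_1|$ must lie in the pericenter–apocenter window; but since here $a$ is fixed while the orbit's eccentricity is free (it is determined by $R$), the only hard constraint from the orbit side is $A_1 < 2a$, i.e. the point must be reachable by \emph{some} ellipse of semi-major axis $a$ focused at $F$ — equivalently $C_1^{\pm} = 2a - A_1 > 0$, so that the second focus is at positive distance. Taking this over all admissible $P_1$ (the worst case being $A_1$ maximal, $A_1 = a_{\mathbf{K}} + c_{\mathbf{K}}$, but one needs the condition to hold for the relevant reflection point; a cleaner sufficient-and-necessary phrasing uses the minimal focal distance $a_{\mathbf{K}} - c_{\mathbf{K}}$), one extracts $2a > a_{\mathbf{K}} - c_{\mathbf{K}} > 0$, the second half $a_{\mathbf{K}} - c_{\mathbf{K}} > 0$ being just non-degeneracy of $\mathbf{K}$ as a genuine ellipse.

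For the bounds on $R$, I would use the triangle $F' F_1 P_1$, or more directly the relation $C_1^2 = (B_1 + k)^2 + h^2$ together with $k^2 + h^2 = R^2$ from the theorem's proof. Eliminating, one gets $C_1^2 = B_1^2 + 2 B_1 k + R^2$, and since $|k| \le R$ (as $k^2 \le k^2 + h^2 = R^2$), this gives $ (B_1 - R)^2 \le C_1^2 \le (B_1 + R)^2$, i.e. $|B_1 - R| \le C_1 \le B_1 + R$ — simply the triangle inequality for $F', P_1$ and the point $F_1$ which lies at distance $R$ from $F'$ and $C_1$ from $P_1$. Now substitute $C_1 = 2a - A_1 = 2a - (2a_{\mathbf{K}} - B_1)$, so that $C_1 - B_1 = 2(a - a_{\mathbf{K}})$ (the $(\mathbf{E},\mathcal{E})$ relation already in the text). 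The inequality $C_1 \le B_1 + R$ becomes $2(a - a_{\mathbf{K}}) \le R$; applied with the roles reversed, or by using the other triangle inequality $B_1 \le C_1 + R$, one gets $2(a_{\mathbf{K}} - a) \le R$; together $|2a_{\mathbf{K}} - 2a| \le R$. The upper bound $R \le 2a + 2c_{\mathbf{K}}$ should come from pushing $P_1$ to the position where the configuration degenerates, $B_1 = a_{\mathbf{K}} + c_{\mathbf{K}}$, $A_1 = a_{\mathbf{K}} - c_{\mathbf{K}}$, giving $C_1 = 2a - a_{\mathbf{K}} + c_{\mathbf{K}}$ and then $R \le C_1 + B_1 = 2a + 2c_{\mathbf{K}}$ from $C_1 \le B_1 + R$ read as $R \ge C_1 - B_1$ ... so more care is needed: the upper bound on $R$ is the requirement that the foci-circle still meets the relevant locus, i.e. $R \le \max_{P_1}(B_1 + C_1) = (a_{\mathbf{K}} + c_{\mathbf{K}}) + (2a - a_{\mathbf{K}} + c_{\mathbf{K}}) = 2a + 2c_{\mathbf{K}}$.

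The main obstacle I anticipate is being precise about \emph{which} extremal configurations of $P_1$ control each inequality, and checking that the extremes are simultaneously attainable — one must verify that at the boundary values of $R$ and $a$ there genuinely exists a reflection point $P_1 \in \mathbf{K}$ realizing them, rather than merely that the algebraic inequalities are forced pointwise. A careful way to organize this is to treat $(A_1, C_1)$, equivalently $(B_1, C_1)$, as the genuine pair of variables constrained by (i) $B_1 \in [a_{\mathbf{K}} - c_{\mathbf{K}}, a_{\mathbf{K}} + c_{\mathbf{K}}]$, (ii) $C_1 = B_1 + 2(a - a_{\mathbf{K}})$ with $C_1 > 0$, (iii) $|B_1 - C_1| \le R \le B_1 + C_1$, and then to observe that the existence of \emph{some} $B_1$ satisfying all three is exactly equivalent to the two displayed inequalities in the proposition. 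I would close by remarking that the degenerate sub-cases ($c_{\mathbf{K}} = 0$, circle; or $a = a_{\mathbf{K}}$) are recovered continuously, which serves as a consistency check on the bounds.
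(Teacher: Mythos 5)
Your proposal is correct and in substance the same as the paper's proof: the bound $R \ge 2|a-a_{\mathbf{K}}|$ is obtained exactly as in the paper from the two-foci relations at a reflection point (triangle inequality for $F'$, $P_1$, $F_1$), and the condition $2a > a_{\mathbf{K}}-c_{\mathbf{K}} > 0$ from the minimal focal distance of $\mathbf{K}$ to $F$, which is the paper's ``Hill's region contains the pericenter'' statement. The only difference is cosmetic and concerns the upper bound: the paper notes that the Hill's disk $D(F,2a)$ contains the second focus and hence must meet the foci-circle $S(F',R)$, giving $R \le 2a+2c_{\mathbf{K}}$, while you obtain the same bound from $R \le B_1 + C_1 = 2B_1 + 2(a-a_{\mathbf{K}}) \le 2a+2c_{\mathbf{K}}$ at an actual reflection point; both are valid necessity arguments resting on the same geometric facts.
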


For fixed $a_{\mathbf{K}}>c_{\mathbf{K}}>0$, we call such a pair of $(a, R)$ \emph{admissible}.

\begin{proof} The Hill's region is the projection in configuration space of the energy hypersurface. The particle is allowed to move only in this region. The Hill's region of the Kepler problem with semi major axis $a$ is the disk $D(F, 2 a)$ centered at $F$, which has to contain some point $\mathbf{K}$ and in particular its pericenter, which has distance $a_{\mathbf{K}}-c_{\mathbf{K}}$ from $F$. We thus have the first condition.

Also, the Hill's region $D(F, 2 a)$ has to contain the second orbital focus, so it has to intersect the foci circle $S(F', R)$. This is equivalent to saying that the closest point from $S(F', R)$ to $F$ is in $D(F, 2 a)$. We thus get 
$$-2 a \le 2 c_{\mathbf{K}}-R \le2 a$$
and thus 
$$2 (c_{\mathbf{K}}-a)<R<2 (a+c_{\mathbf{K}}).$$
On a point of reflection $P$, if a corresponding second orbital focus is $F_{1}$, then we have by the two-foci definition of ellipses 
$$|FP|+|F'P|=2 a_{\mathbf{K}},~~  |FP|+|F_{1}P|=2 a,$$
so we have 
$$R=|F'F_{1}| \ge 2 |a-a_{\mathbf{K}}|.$$  
As 
$$c_{\mathbf{K}}-a<a_{\mathbf{K}}-a \le  |a-a_{\mathbf{K}}|$$
we get the second condition. The combined conditions are represented in Fig. \ref{fig:region}. 
\end{proof}

This condition is also sufficient, in the sense that for any value of $(a, R)$ satisfying Prop. \ref{prop: admissible EE}, there exist billiard trajectories of elliptic Keplerian orbits with these parameter values. This follows from the following analysis on orbits of the second foci, which asserts that for all these parameter values there are orbits of the second foci $\{F_{i}\}_{i \in \mathbb{Z}}$ and the following simple observation: If $F_{i}$ and $F_{i+1}$ are both real, then by the previous construction, $F_{i}$ is reflected to $F_{i+1}$ by the line perpendicular to $F_{i} F_{i+1}$ and pass their middle point. Moreover, by construction, this line is tangent to $\mathbf{K}$ and thus, in particular, the corresponding elliptic Keplerian orbits are reflected to each other at the point of tangency. Thus, the analysis in the sequel faithfully reflects the dynamics of the Kepler billiard system.

\begin{figure}[htb]
\centering
\includegraphics[scale=0.63]{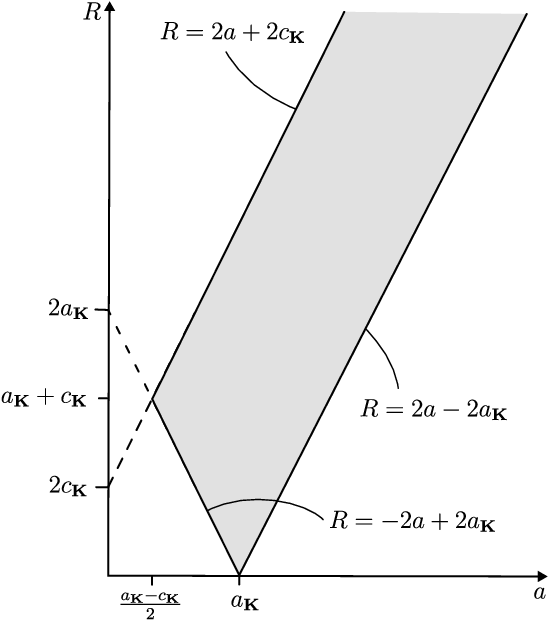}
\caption{Gray regions displaying the admissible values ($a,R$) for physical reflections with the elliptic boundary.}
\label{fig:region}
\end{figure}





\subsubsection{Scenarios of the Two Circles}\label{Subsection: Two Circles}

We fix an admissible $a > (a_{\mathbf{K}}-c_{\mathbf{K}})/2$ and increase $R$ from $0$. This results in different scenarios.

When $R=0$, we have
$$x_{0}=-\dfrac{2 (a-a_{\mathbf{K}})^{2} c_{\mathbf{K}}}{ (a_{\mathbf{K}}^2-c_{\mathbf{K}}^2)}, r_{0}=\dfrac{2 a_{\mathbf{K}} (a-a_{\mathbf{K}})^{2} }{ (a_{\mathbf{K}}^2-c_{\mathbf{K}}^2)}+2 (a-a_{\mathbf{K}}).$$
As $a_{\mathbf{K}}>c_{\mathbf{K}}$, we have $r_{0}>|x_{0}|$ and thus the degenerate $S(F', 0)=F'$ lies inside $S(x_{0}, r_{0})$, unless $a=a_{\mathbf{K}}$.

When $a \neq a_{\mathbf{K}}$, we increase $R$ from $0$: When $R$ is sufficiently small, the foci-caustic circle lies outside the foci-circle. This is the prohibited case, as no lines intersecting $S(F', R)$ can be tangent to  $S(x_{0}, r_{0})$. 

This situation does not change till $R=2 |a-a_{\mathbf{K}}|$, for which there hold $x_{0}=0, r_{0}= 2 (a-a_{K})>0$. In this case the two circles $S(F', R)$ and $S(x_{0}, r_{0})$ coincide. We have $R > 2 |a-a_{\mathbf{K}}|$ for admissible $R$ as in Prop. \ref{prop: admissible EE}. 

Moreover by \eqref{eq: x_{0} r_{0} EE }, for $R>2 |a-a_{\mathbf{K}}|$, always we have $x_{0}>0$.

For $R > 2 |a-a_{\mathbf{K}}|$, we consider the quantity 
$$x_{0}+|r_{0}|- R.$$

If $r_{0}>0$, then this quantity is
$$x_{0}+r_{0}-R=\dfrac{ (4 (a-a_{\mathbf{K}})^{2} - R^{2}) }{2 (a_{\mathbf{K}}+c_{\mathbf{K}})}+2 (a-a_{\mathbf{K}})-R$$
$$=\dfrac{ (2 (a-a_{\mathbf{K}})-R) (2 (a+c_{\mathbf{K}})-R) }{2 (a_{\mathbf{K}}+c_{\mathbf{K}})}.$$

If $r_{0}<0$, then it is
$$x_{0}-r_{0}-R=-\dfrac{ (4 (a-a_{\mathbf{K}})^{2} - R^{2}) }{2 (a_{\mathbf{K}}-c_{\mathbf{K}})}-2 (a-a_{\mathbf{K}})-R$$
$$=-\dfrac{ (2 (a-a_{\mathbf{K}})+R) (2 (a-c_{\mathbf{K}})-R) }{2 (a_{\mathbf{K}}-c_{\mathbf{K}})}.$$

We distinguish two cases:
\medskip

{\bf Case 1}: $a-c_{\mathbf{K}} \ge |a-a_{\mathbf{K}}|$, or equivalently $2 a \ge a_{\mathbf{K}}+c_{\mathbf{K}}$, since we have $a_{\mathbf{K}} > c_{\mathbf{K}} >0$.
\medskip

In this case, we have $x_{0}+|r_{0}|<R$ for $2 |a-a_{\mathbf{K}}|< R < 2 (a-c_{\mathbf{K}})$. As $x_{0}>0$, geometrically this implies that $S(x_{0}, r_{0})$ lies inside $S(F', R)$. 

In the particular case $r_{0}=0$, the foci-caustic circle degenerates into a point, and the corresponding foci-orbit are $2$-periodic. This is given by 
$$r_{0}:=\dfrac{(4 a_{\mathbf{K}} (a-a_{\mathbf{K}})^{2} -a_{\mathbf{K}} R^{2}) }{2 (a_{\mathbf{K}}^2-c_{\mathbf{K}}^2)}+2 (a-a_{\mathbf{K}})=0,$$

so
$$R=R_{2} := 2  \sqrt{(a-a_{\mathbf{K}}) ( a-c_{\mathbf{K}}^2/ a_{\mathbf{K}} )}. $$

As $c_{\mathbf{K}}< a_{\mathbf{K}}$, we have $R_{2}> 2 |a-a_{\mathbf{K}}|$. We also have $R_{2}< 2 |a-c_{\mathbf{K}}|$, which follows from the inequality

$$(a-a_{\mathbf{K}}) ( a-c_{\mathbf{K}}^2/ a_{\mathbf{K}} )<(a-c_{\mathbf{K}})^{2}.$$
This, after expansion, is equivalent to 
$$-  a_{\mathbf{K}} -  c_{\mathbf{K}}^2/ a_{\mathbf{K}} < -2  c_{\mathbf{K}},$$
which holds true by multiplying both sides by $a_{\mathbf{K}}$ and then completing the square.

We thus have $2 |a-a_{\mathbf{K}}|< R_{2} < 2 (a-c_{\mathbf{K}})$.

When $R=2 (a-c_{\mathbf{K}})$, we have

$$r_{0}=\dfrac{(4 a_{\mathbf{K}} (a-a_{\mathbf{K}})^{2} -4 a_{\mathbf{K}} (a-c_{\mathbf{K}})^{2}) }{2 (a_{\mathbf{K}}^2-c_{\mathbf{K}}^2)}+2 (a-a_{\mathbf{K}})=  \dfrac{ 2 a (c_{\mathbf{K}} -  a_{\mathbf{K}}) }{ a_{\mathbf{K}}+c_{\mathbf{K}}} <0,$$

$$x_{0}=\dfrac{2 c_{\mathbf{K}} ( 2 a-a_{\mathbf{K}} -c_{\mathbf{K}}) }{ a_{\mathbf{K}}+c_{\mathbf{K}}}.$$

For this case we have $x_{0}-r_{0}=x_{0}+|r_{0}|=R$, meaning that the two circles are tangent to each other, with   $S(x_{0}, r_{0})$ lies inside $S(F', R)$. 

Note that for this parameter value, there exists a particular focal reflection property \cite{JZ}: The ellipses tend monotonically to the degenerate limit of a collinear bouncing orbit at the pericenter of the boundary ellipse along its major axis. Now this property has an almost trivial geometrical proof: The iterated (oriented) foci-chords are well-defined outside the point of tangency of these two circles, and tend to the point of tangency.

For $2 (a-c_{\mathbf{K}}) < R < 2 (a+c_{\mathbf{K}})$, the two circles  $S(x_{0}, r_{0})$ and $S(F', R)$ intersect transversely. Only on the part of $S(F', R)$ lying outside $S(x_{0}, r_{0})$, there defines the dynamics of our iterated foci-chords. 


When $R=2 (a+c_{\mathbf{K}})$, we have $x_{0}+r_{0}=x_{0}-|r_{0}|=-R$. This means a tangency of the two circles $S(F', R)$ and $S(x_{0}, r_{0})$ at which $S(x_{0}, r_{0})$ encloses $S(F', R)$. 









The case that $R> 2(a+c_{\mathbf{K}})$, so that the two circles $S(F', R)$ and $S(x_{0}, r_{0})$ are separated, is disallowed by Prop. \ref{prop: admissible EE}.

\medskip

{\bf Case 2}: $a-c_{\mathbf{K}} < |a-a_{\mathbf{K}}|$, or equivalently $ a_{\mathbf{K}}-c_{\mathbf{K}} < 2 a < a_{\mathbf{K}}+c_{\mathbf{K}}$.

We have for $2 |a-a_{\mathbf{K}}|<R<2(a+c_{\mathbf{K}})$, there holds
$$x_{0}+r_{0}<R,~~~ x_{0}-r_{0}>R,$$
so the two circles  $S(x_{0}, r_{0})$ and $S(F', R)$ intersect transversely. 

For $R=2(a+c_{\mathbf{K}})$, the two circles are tangent to each other. The case  $R>2(a+c_{\mathbf{K}})$ is disallowed by Prop. \ref{prop: admissible EE}.

\section{The Associated Elliptic Curve of the $(\mathbf{E}, \mathcal{E})$ Case}\label{Sec: Elliptic Curve}

\subsection{The Griffiths-Harris Construction \cite{GH}}
Write $\mathcal{C}_{1}=S(O, R)$, $\mathcal{C}_{2}=S(x_{0}, r_{0})$ for the two circles, where $O=F'$. When considering periodic trajectories of an integrable Kepler billiard, being one of the cases listed before, considering foci-chords leads us to a particular case of the Poncelet porism. An intrinsic link between the Poncelet theorem and the theory of elliptic curves is pointed out by Griffiths-Harris in their classical work \cite{GH}. 

According to \cite{GH}, we should consider the lifted quadrics in $\mathbf{CP}^{2}$. We write $[z_{1}:z_{2}:z_{3}]$ for the homogeneous coordinates of points in  $\mathbf{CP}^{2}$. 
The complexifications of the curves $\mathcal{C}_{1}, \mathcal{C}_{2}$ are given respectively by the homogeneous equations
\begin{align}
    & {\cal{C}}_1: z_{1}^2+z_{2}^2-R^2 z_{3}^2=0,\\
    & {\cal{C}}_2: z_{1}^2-2 x_{0} \cdot z_{1} \cdot z_{3} +z_{2}^2+(x_{0}^2-r_{0}^{2}) z_{3}^2=0.
\end{align}

These are associated with the symmetric matrices
\begin{align}
   & Q_1[z_{1},z_{2},z_{3}]\mapsto Q_1=\begin{pmatrix}
        1 & 0 & 0\\
        0 & 1 & 0\\
        0 & 0 & -R^2
    \end{pmatrix},\\
  &  Q_2[z_{1},z_{2},z_{3}]\mapsto Q_2=\begin{pmatrix}
        1 & 0 & - x_{0}\\
        0 & 1 & 0\\
        - x_{0} & 0 & x_{0}^2-r_{0}^2
    \end{pmatrix}.
\end{align}

These curves are singular if and only if their associated symmetric matrices are singular, corresponding respectively to the cases $R=0$ and $r_{0}=0$. When $R=0$, the foci-circle degenerates into a point, and thus all points are fixed (1-periodic). This is possible only if $a=a_{\mathbf{K}}$. The case $r_{0}=0$ corresponds to the case when the foci-caustic circle degenerates into a point, and the corresponding orbits are all 2-periodic: This is the case $R=R_{2}$.

Otherwise, both curves are non-singular. From now on we assume $R \notin \{0, R_{2}\}$.

When $\mathcal{C}_{1}, \mathcal{C}_{2}$ are non-singular, their dual conics $\mathcal{C}^{*}_{1}$ and $\mathcal{C}^{*}_{2}$, consisting respectively of their tangent lines, are also non-singular in the dual projective space $\mathbf{CP}^{2 *}$, with associated symmetric matrices $Q_{1}^{-1}$ and $Q_{2}^{-1}$ respectively. 

Define the \emph{incidence variety} as
$$E= \{ (p, \xi) \in \mathcal{C}_2 \times \mathcal{C}_{1}^{*}, p \in \xi\}.$$
When $\mathcal{C}_{1}$ and $\mathcal{C}_{2}$ are transverse, $E$ is a smooth algebraic curve of genus 1 in $\mathbf{CP}^{2} \times \mathbf{CP}^{2 *}$. A proper choice of a base point as identity for the group law makes it into an elliptic curve.

A handy way to describe this elliptic curve is introduced in \cite{GH2}, which also leads to Cayley's criterion for $n$-periodic trajectories. This is to consider the following elliptic curve, which is birationally equivalent to $E$\footnote{Moreover, this birational equivalence preserves realness.},  defined through the pencil of quadrics through $\mathcal{C}_{1}$ and $\mathcal{C}_{2}$, with $\infty$ as the identity:
$$\mathcal{D}: y^{2}=-\det(t \cdot Q_1+Q_2).$$

Both $Q_{i}$ and $-Q_{i}$ define the same quadric. The negative sign is nevertheless preferred to help us track the real components of the associated elliptic curves.

We have
\begin{align}
t \cdot Q_1+Q_2:=  &\begin{pmatrix}
        t+1 & 0 & -x_{0}\\
        0 & t+1 & 0\\
        - x_{0} & 0 & -R^2 t+x_{0}^{2}-r_{0}^{2}
    \end{pmatrix},
\end{align}

whose determinant is computed as

\begin{equation}
\begin{split}
\det(t \cdot Q_1+Q_2) & = (t+1)^{2} (-R^{2} t+x_{0}^{2}-r_{0}^{2})-x_{0}^{2} (t+1)\\
 &=-R^{2}(t+1) \Bigl(   t^{2} -(\dfrac{x_{0}^{2}-r_{0}^{2}}{R^{2}}-1) t  + \dfrac{r_{0}^{2}}{R^{2}}\Bigr)\\
 & = -R^{2} (t+1)  (t+t_{1}) (t+t_{2}).
\end{split}
\end{equation}


We thus obtain the following expression for $\mathcal{D}$:
$$\mathcal{D}: y^{2}=R^{2} (t+1)  (t+t_{1}) (t+t_{2}),$$
in which
$$t_{1}=-\dfrac{1}{2}(\dfrac{x_{0}^{2}-r_{0}^{2}}{R^{2}}-1) - \dfrac{1}{2}\sqrt{(\dfrac{x_{0}^{2}-r_{0}^{2}}{R^{2}}-1)^{2}- 4 \dfrac{r_{0}^{2}}{R^{2}}},$$
$$t_{2}=-\dfrac{1}{2}(\dfrac{x_{0}^{2}-r_{0}^{2}}{R^{2}}-1) + \dfrac{1}{2}\sqrt{(\dfrac{x_{0}^{2}-r_{0}^{2}}{R^{2}}-1)^{2}- 4 \dfrac{r_{0}^{2}}{R^{2}}}.$$

\subsection{Discriminant and different cases of real forms}

The sign of the normalized discriminant of the quadratic equation
$$ (t+t_{1}) (t+t_{2})=0$$
is
$$\Delta:=(x_{0}^{2}-r_{0}^{2}-R^{2})^{2}- 4 r_{0}^{2} R^{2},$$
which determines the number of ramification points on the real part of $\mathcal{D}$. A short computation leads to the factorization
$$\Delta=(x_{0}+r_{0}-R) (x_{0}-r_{0}+R) (x_{0}-r_{0}-R)  (x_{0}+r_{0}+R). $$

For $2 |a-a_{\mathbf{K}}|< R < 2 (a-c_{\mathbf{K}})$, the circle $S(x_{0}, r_{0})$ lies inside $S(F', R)$. We thus have 
$$-R< x_{0} \pm r_{0}<R$$ and thus $\Delta>0$ in this case. 

In addition, this implies $x_{0}^{2}-r_{0}^{2}-R^{2}<0$ and thus $0<t_{1}<t_{2}$. As $t_{1} t_{2} =r_{0}^{2}/R^{2}<1$, necessarily $t_{1}<1$. A short computation shows in this case $t_{2}<1$.
This means that the curve is regular.  All three finite ramification points of $\mathcal{D}$ are real, and thus its real component is the union of two circles. 

For $R=2 (a-c_{\mathbf{K}})$, we have $t_{2}=1$, the circle $S(x_{0}, r_{0})$ is tangent from inside to $S(F', R)$. In this case, $\Delta=0$ and the elliptic curve $\mathcal{D}$ is singular: two circles are pinched at one point. 

For $2 (a-c_{\mathbf{K}})< R <2 (a+c_{\mathbf{K}})$, the two circles intersect transversally.  In this case we have $x_{0}>0, r_{0}<0$ and
$$x_{0}-r_{0}>R, x_{0}+r_{0}> -R, $$
with moreover
$$x_{0}-r_{0}>0>-R, x_{0}+r_{0} < R.$$

For this case, we thus have $\Delta<0$, and two of the three finite ramification points are not real. The real component is thus a circle.

We summarize these discussions in the following theorem:

\begin{theorem}  The incidence variety $E$ is birationally equivalent to the elliptic curve given by
$$\mathcal{D}': y^{2}=(t+1) (t+t_{1}) (t+t_{2})$$
in which
$$t_{1}=-\dfrac{1}{2}(\dfrac{x_{0}^{2}-r_{0}^{2}}{R^{2}}-1) - \dfrac{1}{2 R^{2}}\sqrt{\Delta},$$
$$t_{2}=-\dfrac{1}{2}(\dfrac{x_{0}^{2}-r_{0}^{2}}{R^{2}}-1) + \dfrac{1}{2 R^{2}}\sqrt{\Delta},$$
with 
$$\Delta:=(x_{0}^{2}-r_{0}^{2}-R^{2})^{2}- 4 r_{0}^{2} R^{2}=(x_{0}+r_{0}-R) (x_{0}-r_{0}+R) (x_{0}-r_{0}-R)  (x_{0}+r_{0}+R).$$

Moreover, the real part $E_{\mathbb{R}}$ of $E$ is mapped to the real component $\mathcal{D}_{\mathbb{R}}'$ of $\mathcal{D}'$, which we characterize as follows:
\begin{itemize}
\item $2 |a-a_{\mathbf{K}}|< R < 2 (a-c_{\mathbf{K}})$: the union of two circles;
\item $R=2 (a-c_{\mathbf{K}})$: two circles pinched at one point;
\item  $2 (a-c_{\mathbf{K}})< R <2 (a+c_{\mathbf{K}})$: a circle. 
\end{itemize}

\end{theorem}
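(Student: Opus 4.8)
The plan is to push the Griffiths--Harris construction of Section~\ref{Sec: Elliptic Curve} through to the end and then keep careful track of signs. First I would invoke from \cite{GH2} that the incidence variety $E$ is birationally equivalent, over $\mathbb{R}$ and compatibly with the real structure, to $\mathcal{D}: y^{2}=-\det(t\, Q_{1}+Q_{2})$ with $\infty$ as identity. The determinant was already evaluated above as $-\det(t\, Q_{1}+Q_{2})=R^{2}(t+1)(t+t_{1})(t+t_{2})$; since each of the three parameter ranges in the statement forces $R>2\lvert a-a_{\mathbf{K}}\rvert\ge 0$, in particular $R\neq 0$, the substitution $y\mapsto R\,y$ is a real isomorphism of $\mathcal{D}$ onto $\mathcal{D}': y^{2}=(t+1)(t+t_{1})(t+t_{2})$, so $E_{\mathbb{R}}$ is carried onto $\mathcal{D}'_{\mathbb{R}}$. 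Matching $(t+t_{1})(t+t_{2})=t^{2}-\frac{x_{0}^{2}-r_{0}^{2}-R^{2}}{R^{2}}t+\frac{r_{0}^{2}}{R^{2}}$ gives $t_{1}+t_{2}=1-\frac{x_{0}^{2}-r_{0}^{2}}{R^{2}}$ and $t_{1}t_{2}=\frac{r_{0}^{2}}{R^{2}}$, hence $(t_{1}-t_{2})^{2}=\bigl(1-\frac{x_{0}^{2}-r_{0}^{2}}{R^{2}}\bigr)^{2}-\frac{4r_{0}^{2}}{R^{2}}=\Delta/R^{4}$; this produces the displayed expressions for $t_{1},t_{2}$, and two applications of the difference-of-squares identity give the quoted factorization of $\Delta$.

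Second, I would recall the topology of the real locus of a real cubic $y^{2}=p(t)$ with $p$ monic of degree three: after adjoining the point at infinity it is a disjoint union of two circles if $p$ has three distinct real roots, a single circle if $p$ has exactly one real root, and --- if $p$ has a real double root lying strictly to the right of its simple root --- a curve with a crunode whose real locus consists of two circles meeting at that one point. In our setting $-1$ is always a root, and the remaining roots $-t_{1},-t_{2}$ satisfy $(t_{1}-t_{2})^{2}=\Delta/R^{4}$; moreover $-1$ is never a repeated root, since $t_{i}=1$ would (via $t_{1}+t_{2}=1+t_{1}t_{2}$) force $x_{0}=0$, which is impossible because $x_{0}>0$ throughout the admissible range. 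Thus the trichotomy is governed exactly by the sign of $\Delta$: three distinct real roots when $\Delta>0$; a double root $t_{1}=t_{2}=\tau$ when $\Delta=0$, in which case $\tau^{2}=t_{1}t_{2}=r_{0}^{2}/R^{2}$, and since $S(x_{0},r_{0})$ then lies in the disk $D(F',R)$ with $x_{0}>0$ we have $\lvert r_{0}\rvert<R$, so $\lvert\tau\rvert<1$ and the double root $-\tau$ lies strictly to the right of the simple root $-1$; and a complex-conjugate pair when $\Delta<0$.

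It then remains to read off the sign of $\Delta$ on the three ranges from the scenarios of the two circles established in Section~\ref{Subsection: Two Circles}, using $\Delta=(x_{0}+r_{0}-R)(x_{0}-r_{0}+R)(x_{0}-r_{0}-R)(x_{0}+r_{0}+R)$. On $2\lvert a-a_{\mathbf{K}}\rvert<R<2(a-c_{\mathbf{K}})$ the circle $S(x_{0},r_{0})$ lies strictly inside $S(F',R)$, so $x_{0}+\lvert r_{0}\rvert<R$ and $x_{0}-\lvert r_{0}\rvert>-R$; hence $x_{0}\pm r_{0}-R<0$ while $x_{0}\pm r_{0}+R>0$, so two of the four factors are negative and two positive, $\Delta>0$, and the real locus is a union of two circles. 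For $R=2(a-c_{\mathbf{K}})$ the two circles are internally tangent, with $x_{0}+\lvert r_{0}\rvert=R$ and $r_{0}<0$, so the single factor $x_{0}-r_{0}-R=x_{0}+\lvert r_{0}\rvert-R$ vanishes, $\Delta=0$, and we are in the crunodal case above: two circles pinched at one point. On $2(a-c_{\mathbf{K}})<R<2(a+c_{\mathbf{K}})$ --- and, when $2a<a_{\mathbf{K}}+c_{\mathbf{K}}$, on the whole admissible range --- the circles meet transversally with $x_{0}-r_{0}>R>0$ and $-R<x_{0}+r_{0}<R$, so $x_{0}+r_{0}-R<0$ and the other three factors are positive, $\Delta<0$, and the real locus is a single circle.

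The bookkeeping is elementary throughout. The step that genuinely needs care --- and the one I would flag as the main obstacle --- is the boundary case $\Delta=0$: one must check that the degenerate cubic acquires a crunode rather than an acnode or a cusp and that its double root lies to the right of the simple root $-1$, so that the real picture is two circles joined at a point instead of a single circle with an isolated point; this is exactly where the geometric input that $S(x_{0},r_{0})$ is internally tangent to $S(F',R)$ with $x_{0}>0$, whence $\lvert r_{0}\rvert<R$, is indispensable.
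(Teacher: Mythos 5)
Your proposal is correct and follows essentially the same route as the paper: the pencil determinant $\det(tQ_1+Q_2)=-R^2(t+1)(t+t_1)(t+t_2)$, the factorization of $\Delta$, and the sign analysis of the four factors read off from the two-circle scenarios of Section \ref{Subsection: Two Circles}. The only genuine addition is your explicit verification that $t=-1$ is never a repeated root (via $x_0>0$) and that at $\Delta=0$ the double root $-\tau$ with $\tau=|r_0|/R\in(0,1)$ lies to the right of $-1$, so the singular real locus is a crunodal ``two circles pinched at a point'' rather than a circle with an acnode --- a detail the paper leaves implicit.
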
 

Next, we identify the corresponding shift, which linearizes the dynamics on the elliptic curve, which is denoted by $\sigma$. According to \cite{GH} this is given by one of the two points over $t=0$ in $\mathcal{D}$: We easily find that in $\mathcal{D}'$:
$$y^{2}=t_{1} t_{2}=r_{0}^{2}/R^{2}, \quad y=\pm r_{0}/R. $$

 When $2 |a-a_{\mathbf{K}}|< R < 2 (a-c_{\mathbf{K}})$, the real part $\mathcal{D}_{\mathbb{R}}'$ is defined over $(-1, -t_{2})$ and $(-t_{1}, \infty)$ and the shift $\sigma$ sends a point $t=\infty$ to a point over $t=0$ lying in the same circle. By the theory of elliptic curves, $\sigma$ is independent of the initial points, and thus always stays in the same circle component, so $\sigma$ is a rotation when restricted to one of the two circles. When $2 (a-c_{\mathbf{K}})< R <2 (a+c_{\mathbf{K}})$, the real part is a circle, with a nontrivial linear shift $\sigma$ defined there on.


\subsection{The Weierstrass Form}

We start with the curve
$$\mathcal{D}': y^{2}=(t+1) (t+t_{1}) (t+t_{2})$$
and mark the shift $\sigma$ represented by $\sigma (\infty, \infty)= (0, \pm  r_{0}/R)$ of our relevance.

Expanding the defining equation for $\mathcal{D}'$ we have



$$y^{2}=t^{3}+(t_{1}+t_{2}+1)t^{2} + (t_{1}+t_{2}+t_{1} t_{2})t + t_{1} t_{2}.$$

We set $t=\tilde{t}-\dfrac{t_{1}+t_{2}+1}{3}$ to eliminate the quadratic term. We get

$$y^{2}=\Bigl(\tilde{t}-\dfrac{t_{1}+t_{2}+1}{3}\Bigr)^{3}$$
$$+(t_{1}+t_{2}+1) \Bigl(\tilde{t}-\dfrac{t_{1}+t_{2}+1}{3})^{2} + (t_{1}+t_{2}+t_{1} t_{2}\Bigr) (\tilde{t}-\dfrac{t_{1}+t_{2}+1}{3}) + t_{1} t_{2}.$$

Expanding this equation and writing $y=\tilde{y}/2$, we obtain





$$\tilde{y}^{2}=4 \tilde{t}^{3} + 4 \Bigl((t_{1}+t_{2}+t_{1} t_{2}) - \dfrac{(t_{1}+t_{2}+1)^{2}}{3}\Bigr) \tilde{t}$$
$$ + 4 (\dfrac{2 (t_{1}+t_{2}+1)^{3}}{27}  -\dfrac{t_{1}+t_{2}+1}{3}(t_{1}+t_{2}+t_{1} t_{2})  + t_{1} t_{2}\Bigr),$$

which represents $\tilde{\mathcal{D}}$ in the Weierstrass form

$$\tilde{\mathcal{D}}: \tilde{y}^{2}=4 \tilde{t}^{3} -g_{2} \tilde{t}-g_{3} $$
with
$$g_{2}=-4 \Bigl((t_{1}+t_{2}+t_{1} t_{2}) - \dfrac{(t_{1}+t_{2}+1)^{2}}{3}\Bigr),$$
$$g_{3}=-4 \Bigl(\dfrac{2 (t_{1}+t_{2}+1)^{3}}{27}  -\dfrac{t_{1}+t_{2}+1}{3}(t_{1}+t_{2}+t_{1} t_{2})  + t_{1} t_{2}\Bigr).$$

This curve is parametrized by a Weierstrass $\wp$-function $\wp$ such that $(\tilde{t}, \tilde{y})=(\wp, \wp')$ is associated to a lattice $\Lambda \in \mathbb{C}$ uniquely determined by the equation of $\tilde{\mathcal{D}}$.

On the other hand from the equation of $\mathcal{D}'$ we have 



$$ \tilde{\mathcal{D}}: \tilde{y}^{2}= 4 (\tilde{t}-e_{1}) (\tilde{t}-e_{2}) (\tilde{t}-e_{3}),$$
in which 
$$e_{1}=\dfrac{t_{1}+t_{2}+1}{3}-1, e_{2}=\dfrac{t_{1}+t_{2}+1}{3}-t_{2}, e_{3}=\dfrac{t_{1}+t_{2}+1}{3}-t_{1}.$$
We have in any case $e_{1} \in \mathbb{R}$. When $\Delta>0$, we have 
$$ 0 < t_{1}<t_{2}<1,$$
 and thus $e_{1}<e_{2}<e_{3}$ in this case. When $\Delta<0$, $e_{2}$  is complex conjugate to $e_{3}$, while $e_{1}$ is real. 

A pair of fundamental periods $\{\omega_{1}, \omega_{2}\}$ of the lattice $\Lambda$ associated to the elliptic curve can be computed by integrating the invariant differential  $\dfrac{d \tilde{t}}{\sqrt{4 \tilde{t}^{3} -g_{2} \tilde{t}-g_{3} }}$  over the fundamental cycles. Explicitly, when $\Delta>0$, we may use the formulas
$$\omega_{1}/2=\int_{e_{3}}^{+\infty} \dfrac{d \tilde{t}}{\sqrt{4 \tilde{t}^{3} -g_{2} \tilde{t}-g_{3} }},$$
$$\omega_{2}/2=\int_{-\infty}^{e_{1}} \dfrac{d \tilde{t}}{\sqrt{4 \tilde{t}^{3} -g_{2} \tilde{t}-g_{3} }}$$
to compute the periods, in which $\omega_{1}$ is real and $\omega_{2}$ is purely imaginary. The fundamental domain is a rectangle. The real part of $\tilde{D}$ is the union of two circles, each having period $\omega_{1}$.


The shift on $\tilde{\mathcal{D}}$ is given by integration from the point $(\tilde{t}, \tilde{y})=(\infty, \infty)$ to $(\tilde{t}, \tilde{y})=(\frac{t_{1}+t_{2}+1}{3}, \pm 2 r_{0}/R)$ of the invariant differential $\dfrac{d \tilde{t}}{\sqrt{4 \tilde{t}^{3} -g_{2} \tilde{t}-g_{3} }}$. Explicitly, this shift can be computed as one of the two choices of $\pm \xi$, with:

$$\xi:= \int_{\frac{t_{1}+t_{2}+1}{3}}^{+\infty} \dfrac{d \tilde{t}}{\sqrt{4 \tilde{t}^{3} -g_{2} \tilde{t}-g_{3} }}.$$

As $\frac{t_{1}+t_{2}+1}{3}> e_{3}=\frac{t_{2}+1-2 t_{1}}{3}$, we have $\xi <\omega_{1}/2$, so shifting by $\xi$ does not exchange the two circles. 


When $\Delta<0$, we know that $e_{1}$ remains real,  $e_{2}, e_{3}$ are non-real and they are complex conjugate. In this case, the fundamental domain of the periodic lattice is a rhombus, generated by $\omega_{1} \in \mathbb{R}$ and $\omega_{2}$ such that $\Re \{\omega_{2}/\omega_{1}\}=1/2$. The real part of  $\tilde{\mathcal{D}}$ is a circle, whose period is computed from 
$$\omega_{1}/2=\int_{e_{1}}^{+\infty} \dfrac{d \tilde{t}}{\sqrt{4 \tilde{t}^{3} -g_{2} \tilde{t}-g_{3} }}. $$
A complex half-period can be computed by 
$$\int_{e_{2}}^{e_{3}} \dfrac{d \tilde{t}}{\sqrt{4 \tilde{t}^{3} -g_{2} \tilde{t}-g_{3} }} $$
along a (vertical) path in $\mathbb{C}$ connecting $e_{2}$ to $e_{3}$.

The associated shift is $\pm \xi$ with 
$$\xi:= \int_{\frac{t_{1}+t_{2}+1}{3}}^{+\infty} \dfrac{d \tilde{t}}{\sqrt{4 \tilde{t}^{3} -g_{2} \tilde{t}-g_{3} }}.$$

As now $e_{1}<\frac{t_{1}+t_{2}+1}{3}$, the shift is within a half real period. 

\textit{Remark:}  It is not hard to see that the frequency, namely the ratio shift/period is a non-constant function of the parameters, indeed this follows from the observation that when $R=2 (a-c_{mathbf{K}})$ there exists 1-periodic orbits which cease to exists in some other cases. From this it is in principle possible to draw KAM type conclusion for sufficiently small perturbations of our system, just as remarked by Felder \cite{Felder} in his study of the line boundary case. As we are dealing with integrable systems, we shall not treat detailed applications of KAM theory here.

\section{Condition on $n$-Periodicity}\label{Sec: Periodicity}
\subsection{ Existence of $n$-periodic orbits in the elliptic case}
We follow the classical argument presented in \cite{Tabachnikov} for the existence of $n$-periodic orbits in the $(\mathbf{E}, \mathcal{E})$ case. The following proposition is rather weak and can certainly be generalized to many other cases, while here we merely want to establish the basic fact of their existence.  

As in \cite{Tabachnikov}, we consider the cyclic configurations space of $n$ points on $\mathbf{E}$:
$$G(\mathbf{E}, n)=\{(q_{1}, q_{2}, \dots, q_{n}): q_{i} \in \mathbf{E}, \, q_{i} \neq q_{j} \,\, \forall i \neq j\}.$$
This space is not connected. Its connected components are further labelled by their rotation number, which is an integer counting how many times the closed collection of chords $\{q_{1} q_{2}, q_{2} q_{3}, \dots, q_{n} q_{1}\}$ winds around $\mathbf{E}$.

\begin{prop}\label{prop: periodic orbits existence} In the $(\mathbf{E}, \mathcal{E})$ case, for a given orbital semi major axis $a$ such that $a > a_{\mathbf{K}}+c_{\mathbf{K}}$, for any integer $n \ge 2$, there exists $R$ such that all the orbits of $(a, R)$ are $n$-periodic.
\end{prop}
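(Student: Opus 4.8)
The plan is to run the standard variational/topological argument from \cite{Tabachnikov} on the space $G(\mathbf{E}, n)$, but with the constraint coming not from the length functional but from the condition that the chords $q_i q_{i+1}$ all be tangent to a single \emph{foci-caustic circle} whose radius is governed by the conserved pair $(a, R)$. Concretely, we should first translate periodicity of the Kepler billiard into periodicity of the iterated second-foci dynamics: by the analysis of Section~\ref{sec: Foci}, an $n$-periodic Kepler billiard trajectory of parameters $(a,R)$ corresponds to a closed polygon $F_1 F_2 \dots F_n$ inscribed in the foci-circle $\mathcal{C}_1 = S(F', R)$ and circumscribed about the foci-caustic circle $\mathcal{C}_2 = S(x_0, r_0)$, with $x_0 = x_0(a,R)$, $r_0 = r_0(a,R)$ as in \eqref{eq: x_{0} r_{0} EE }. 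So it suffices to produce, for each $n \ge 2$, a value of $R$ (admissible in the sense of Prop.~\ref{prop: admissible EE}) for which every inscribed polygon in $\mathcal{C}_1$ circumscribing $\mathcal{C}_2$ closes up after $n$ steps — equivalently, for which the rotation-number-one closed billiard configuration exists.

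The key steps, in order: (i) Fix $a > a_{\mathbf{K}} + c_{\mathbf{K}}$, so that we are in Case~1 of Section~\ref{Subsection: Two Circles} and the relevant parameter interval is $2|a - a_{\mathbf{K}}| < R < 2(a - c_{\mathbf{K}})$, on which $\mathcal{C}_2$ lies strictly inside $\mathcal{C}_1$ and the real part of the elliptic curve $\mathcal{D}'$ is a union of two circles; in particular the foci-chord dynamics is conjugate to a rotation (the map $\sigma \circ \sigma$ of the previous section restricted to one circle component). (ii) Define the rotation number $\rho(R) \in (0,1)$ of this circle rotation — the fraction of a full turn advanced by two iterations of the foci-chord map — as a function of $R$ on this interval. (iii) Show $\rho$ is continuous and takes suitable boundary values: as $R \downarrow 2|a - a_{\mathbf{K}}|$ the two circles merge and (being the $1$-periodic collinear bouncing limit, or the $R_2$ two-periodic degeneration) the rotation number tends to a value making the polygon trivial/$2$-periodic, while as $R \uparrow 2(a - c_{\mathbf{K}})$ the circles become internally tangent and the chords degenerate to the point of tangency, forcing $\rho \to 0$ (the collinear pericenter-bouncing limit). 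One of these endpoints gives $\rho \to \tfrac12$ and the other $\rho \to 0$ (or vice versa), so by the intermediate value theorem $\rho(R) = 1/n$ is attained for some $R$ in the interval, for every $n \ge 2$. (iv) For that $R$, $\rho = 1/n$ forces every orbit of the circle rotation — hence every Kepler billiard orbit of parameters $(a,R)$ — to be $n$-periodic (this is the Poncelet-porism feature: periodicity is independent of the initial point, being a property of the shift $\sigma$ on the elliptic curve). Finally one should check that for this $R$ the closed foci-polygon indeed has all vertices real and distinct and lies on the outer arc where the dynamics is defined, so that it genuinely comes from a Kepler billiard trajectory; this is where one invokes the sufficiency remark following Prop.~\ref{prop: admissible EE}.

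The main obstacle is step~(iii): pinning down the limiting values of the rotation number $\rho(R)$ at the two ends of the admissible interval, and in particular verifying that the interval of attained rotation numbers is large enough to contain every $1/n$ with $n \ge 2$ (i.e., that it contains $(0, 1/2]$ or at least accumulates at both $0$ and a value $\ge 1/2$). One clean way to handle this is via the elliptic-curve picture of Section~\ref{Sec: Elliptic Curve}: the rotation number is $\xi/\omega_1$ (up to the factor $2$ coming from $\sigma\circ\sigma$), and by Prop.~8 (the torsion proposition just proved) this is a non-constant analytic function of $R$; one then only needs its range on $(2|a-a_{\mathbf{K}}|, 2(a - c_{\mathbf{K}}))$ to include all $1/n$. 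Identifying the two endpoint values — using that $R = 2(a - c_{\mathbf{K}})$ is the $1$-periodic collinear-bounce orbit ($\xi/\omega_1 \to 0$, as the foci-caustic circle shrinks to a point on $\mathcal{C}_1$) and that $R = R_2$ (or the inner endpoint) forces a $2$-periodic degeneration ($\xi/\omega_1 = 1/2$) — closes the argument, since continuity then guarantees the value $1/n$ is hit for each $n \ge 2$. Everything else (admissibility, realness, independence of initial point) is either already established in the excerpt or a routine check.
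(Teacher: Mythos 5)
Your route is genuinely different from the paper's. The paper does not touch rotation numbers at all: it passes to the equivalent Hooke billiard via the conformal square map, notes that $a>a_{\mathbf{K}}+c_{\mathbf{K}}$ puts $\mathbf{E}$ strictly inside the Hill region so the Jacobi metric is a genuine Riemannian metric there, and then runs Tabachnikov's variational argument --- maximizing the total Jacobi length $\sum_i d_J(z_i,z_{i+1})$ over the (symmetric) cyclic configuration space, with the triangle inequality forcing the maximizer into the interior of a component --- to produce one $n$-periodic orbit, after which the Poncelet porism upgrades it to ``all orbits of that $(a,R)$ are $n$-periodic.'' This is soft: it never needs the limiting behaviour of any frequency. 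Your approach stays entirely inside the foci-chord/Poncelet picture of Sections \ref{sec: Foci}--\ref{Sec: Elliptic Curve} and, if completed, buys more (orbits of any prescribed rotation number $k/n$, and it only uses Case 1, i.e.\ $2a\ge a_{\mathbf{K}}+c_{\mathbf{K}}$, rather than the stronger hypothesis), at the price of exactly the analytic step you flag: controlling the range of the rotation number.

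On that step your stated boundary values are partly wrong, though fixably so. On the interval $2|a-a_{\mathbf{K}}|<R<2(a-c_{\mathbf{K}})$ the caustic $S(x_0,r_0)$ stays inside $S(F',R)$ throughout, and \emph{both} endpoints give rotation number tending to $0$: at $R\downarrow 2|a-a_{\mathbf{K}}|$ the two circles coincide, so tangent chords shrink and each point barely moves (this is \emph{not} a $2$-periodic degeneration), while at $R\uparrow 2(a-c_{\mathbf{K}})$ the internal tangency creates a (parabolic) fixed point and again $\rho\to 0$. The value $\rho=\tfrac12$ is attained at the \emph{interior} parameter $R=R_2$ (where $r_0=0$ and all chords pass through the point $(x_0,0)$), which the paper shows satisfies $2|a-a_{\mathbf{K}}|<R_2<2(a-c_{\mathbf{K}})$; it is not ``the inner endpoint.'' So the correct IVT runs on $[R_2,\,2(a-c_{\mathbf{K}}))$ (or on $(2|a-a_{\mathbf{K}}|,R_2]$), where $\rho$ is continuous, equals $\tfrac12$ at $R_2$, and tends to $0$ at the tangency; this hits every $1/n$, $n\ge 2$, and the porism (rotation-number rigidity of the shift) plus the sufficiency remark after Prop.~\ref{prop: admissible EE} finishes the proof. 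Two further points to nail down: (a) continuity of $\rho$ at $R=R_2$ cannot be read off from analyticity of $\xi/\omega_1$, since the conic $\mathcal{C}_2$ and hence the curve $\mathcal{D}$ degenerate there --- use instead continuity of the rotation number for the family of circle homeomorphisms given by the chord map, or argue the tangency limit via $\omega_1\to\infty$ as $\Delta\to 0$; (b) define $\rho$ for the chord map itself on the foci-circle rather than for $\sigma\circ\sigma$ on one real component, otherwise a factor of $2$ creeps into the relation between $\rho=1/n$ and $n$-periodicity.
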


\begin{proof}
We transform the problem into the equivalent mechanical billiard problem of an attractive Hooke center lying at the center of an elliptic reflection wall, still denoted by $\mathbf{E}$, based on the complex square mapping 
$$\mathbb{C} \to \mathbb{C}, z \mapsto z^{2}.$$ 
See \cite{TZ1, JZ} for details. In particular, the negative of the energy of the Kepler problem appears as the string constant in the corresponding Hooke problem, having normalized energy 1. 
The relevant corresponding cyclic configuration space is the symmetric cyclic configuration space 
$$\tilde{G}(\mathbf{E}, n)=\{z_{1}, -z_{1}, z_{2}, -z_{2}, \dots , z_{n}, -z_{n}, z_{i}^{2} \neq z_{j}^{2} \,\,\, \forall \,\, i \neq j\},$$
in which $z_{i}^{2}=q_{i}, i=1,2,\cdots n.$

The energy being given, by assumption $\mathbf{E}$, lies entirely in the corresponding Hill's region of the planar Hooke problem. Thus, the corresponding Jacobi metric is a non-degenerate Riemannian metric in the region bounded by $\mathbf{E}$, and the billiard system is equivalent up to a time change to the billiard system defined by geodesics of the Jacobi metric.  We denote by $d_{J} (x, y)$ the Jacobi-distance between two points $x, y$. For the latter, we consider the maximum of the total length function $\sum_{i=1}^{n} d_{J} (z_{i}, z_{i+1})$ with the natural convention $z_{n+1}:=z_{1}$. By triangle inequality, the maximum must be achieved in the interior of a connected component of $\tilde{G}(\mathbf{E}, n)$ and thus gives an $n$-periodic orbit, which in turn gives an $n$-periodic orbit of the Kepler billiard we have started with. Finally, due to its link with the Poncelet porism, for the value of $(a, R)$ corresponds to such an $n$-periodic orbit, all orbits are $n$-periodic.

\end{proof}

In the above proposition, the assertion $a > a_{\mathbf{K}}+c_{\mathbf{K}}$ is rather strong. It is only made to have $\mathbf{E}$ lying entirely in the interior of the Hill's region, for which we do not have to analyse the intersection of the zero-velocity curve (the boundary of the Hill's region) with $\mathbf{E}$, and thus the argument takes its simplest form. On the other hand, already the analysis of the $2$-periodic case in Sec \ref{Subsection: Two Circles}, which only exists in Case 1, shows that when $\mathbf{E}$ intersects the zero-velocity curve, the situation is more involved.

\subsection{Cayley's Criteria for $n$-Periodicity}
Explicit conditions on Poncelet $n$-gon have been found by Cayley and reviewed in modern terms by Griffiths-Harris \cite{GH}. This can be applied directly in our setting. 

To state the criteria, we expand the function $\sqrt{\det(t\cdot C_1+C_2)}$ into a Taylor series around $t=0$:
$$\sqrt{\det(t\cdot C_1+C_2)}=A_0+A_1t+A_2t^2+\dots $$

Then the condition for having a Poncelet $n-$gon for $n\geq 3$, circumscribed by ${\cal{C}}_2$ and inscribed in ${\cal{C}}_1$, is
\begin{align}
    \det \begin{pmatrix}
        A_2 & A_{3} \dots & A_{m+1}\\
        A_3 & A_{4} \dots & A_{m+2}\\
        \dots & \dots & \dots \\
        A_{m+1} & \dots & A_{2m}
    \end{pmatrix}&=0~~~\mbox{for odd}~n=2m+1,\\
    \det \begin{pmatrix}
        A_3 & A_{4} & \dots & A_{m+1}\\
        A_4 & A_{5} & \dots & A_{m+2}\\
        \dots & \dots & \dots \\
        A_{m+1} & A_{m+2} &  \dots & A_{2m-1}
    \end{pmatrix}&=0~~~\mbox{for even}~n=2m.\\
\end{align}


\subsubsection{Computations of $A_{i}$'s}

From previous discussions, we have
\begin{align}
t \cdot Q_1+Q_2:=  &\begin{pmatrix}
        t+1 & 0 & -x_{0}\\
        0 & t+1 & 0\\
        - x_{0} & 0 & -R^2 t+x_{0}^{2}-r_{0}^{2}
    \end{pmatrix},
\end{align}
whose determinant is
$$\det(t \cdot Q_1+Q_2)=-R^{2} (t+1) \Bigl(   t^{2} -(\dfrac{x_{0}^{2}-r_{0}^{2}}{R^{2}}-1) t  + \dfrac{r_{0}^{2}}{R^{2}}\Bigr).$$

We do not need to bother with the negative factor $-1$ in the discussions of periodicity. We set
$$F= R^{2}(t+1) \Bigl(   t^{2} -(\dfrac{x_{0}^{2}-r_{0}^{2}}{R^{2}}-1) t  + \dfrac{r_{0}^{2}}{R^{2}}\Bigr).$$



We have 

$$F=\sum_{i=0}^{3} C_{i} t^{i},$$
with

$$C_{0}=r_{0}^{2}, \, C_{1}=( 2 r_{0}^{2}+R^{2}-x_{0}^{2}), \, C_{2}=2 R^{2}-x_{0}^{2}+r_{0}^{2}, \, C_{3}=R^{2}.$$

If we set
$$\sqrt{F}=\sum_{i=0}^{\infty} A_{i} t^{i},$$
then by comparing coefficients, we obtain

\[
\begin{aligned}
& A_{0}^{2}=C_{0}, \\ 
& A_{0} A_{1}=C_{1}, \\
& 2 A_{0} A_{2} + A_{1}^{2}=C_{2}, \\
& 2 A_{0} A_{3}+2 A_{1} A_{2}=C_{3}, \\
& 2 A_{0} A_{4}+2 A_{1} A_{3}+A_{2}^{2}=0, \\
& 2 A_{0} A_{5}+2 A_{1} A_{4}+2 A_{2} A_{3}=0, \\
& 2 A_{0} A_{6}+2 A_{1} A_{5}+2 A_{2} A_{4}+A_{3}^{2}=0, \\
& \cdots \cdots \cdots
\end{aligned}
\]


After choosing one of the two roots from the first relation, the other coefficients are determined consecutively in an explicit manner. 

The two choices of $A_{0}$ result in just a sign change in all that follows, so we may just choose $A_{0}=r_{0}$. We thus find:

\[
\begin{aligned}
& A_{1}=C_{1}/ (2 A_{0})=\dfrac{2 r_{0}^{2}+R^{2}-x_{0}^{2}}{2 r_{0}},\\ 
& A_{2}=\dfrac{C_{2}-A_{1}^{2}}{2 A_{0}}
\end{aligned}
\]



$$A_{3}=\dfrac{C_{3}-2 A_{1} A_{2}}{2 A_{0}}=\dfrac{R^{2}-2 A_{1} A_{2}}{2 r_{0}}.$$\

Substituting $A_{1}$ and $A_{2}$ in we get
$$A_{3}=\dfrac{8 R^{2} r_{0}^{4}- (2 r_{0}^{2}+R^{2}-x_{0}^{2}) (4 R^{2} r_{0}^{2}  - (x_{0}^{2}-R^{2})^{2})}{16 r_{0}^{5}}.$$

We may then find the other $A_{i}'s$ iteratively:

\[
\begin{split}
& A_{4}=-\dfrac{2 A_{1} A_{3}+A_{2}^{2}}{2 A_{0}}, \\ 
& A_{5}=-\dfrac{2 A_{1} A_{4}+2 A_{2} A_{3}}{2 A_{0}}, \\
& A_{6}=-\dfrac{2 A_{1} A_{5}+2 A_{2} A_{4}+A_{3}^{2}}{2 A_{0}}. \\
\end{split}
\]




In general for $m \ge 2$, we have

$$A_{2m}=-\dfrac{2 A_{1} A_{2m-1}+2 A_{2} A_{2m-2}+\cdots +A_{m}^{2}}{2 A_{0}},$$
and

$$A_{2m+1}=-\dfrac{2 A_{1} A_{2m}+2 A_{2} A_{2m-1}+\cdots +A_{m} A_{m+1}}{2 A_{0}}.$$


Below, we analyse the case corresponding to small $n$:

\subsubsection{$n=2$}

The case $n=2$ corresponds to the degenerate case when the foci-caustic circle becomes a point, and has been considered before in Sec \ref{Subsection: Two Circles}. Recall that the condition $R=R_{2}$ is obtained from
$$r_{0}:=\dfrac{(4 a_{\mathbf{K}} (a-a_{\mathbf{K}})^{2} -a_{\mathbf{K}} R^{2}) }{2 (a_{\mathbf{K}}^2-c_{\mathbf{K}}^2)}+2 (a-a_{\mathbf{K}})=0.$$
This results into a real $R_{2}= 2  \sqrt{(a-a_{\mathbf{K}}) ( a-c_{\mathbf{K}}^2/ a_{\mathbf{K}} )}$ only in Case 1 in Sec \ref{Subsection: Two Circles}. Fig. \ref{fig:period2} displays two Poncelet configurations for a 2-periodic scenario.

\begin{figure}[htb]
\centering
\includegraphics[scale=0.75]{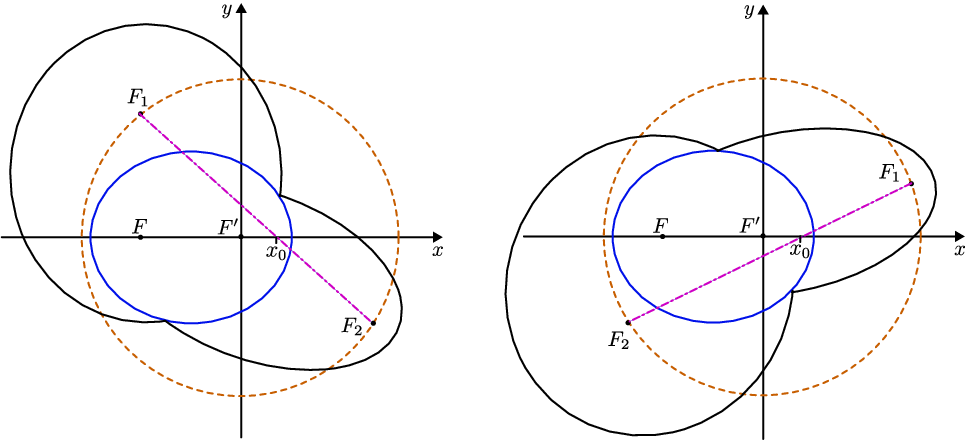}
\caption{2-periodic orbits and the Poncelet property.}
\label{fig:period2}
\end{figure}

\subsubsection{$n=3$}

The condition on $3$-periodic points is
$$A_{2}
=\dfrac{4 R^{2} r_{0}^{2} - (x_{0}^{2}-R^{2})^{2}}{8 r_{0}^{3}}=0,$$
which is equivalent to
\begin{equation}\label{eq: factor 3-periodic}
(2 R r_{0} + x_{0}^{2}-R^{2})(2 R r_{0} - x_{0}^{2}+R^{2})=0,
\end{equation}
so in general, we have two sets of conditions for 3-periodicity, obtained by equalizing each factor to 0.

From \eqref{eq: x_{0} r_{0} EE } we have
$$x_{0}=-\dfrac{(4 (a-a_{\mathbf{K}})^{2} -R^{2}) c_{\mathbf{K}}}{2 (a_{\mathbf{K}}^2-c_{\mathbf{K}}^2)},$$

and

$$ r_{0}=\dfrac{(4 a_{\mathbf{K}} (a-a_{\mathbf{K}})^{2} -a_{\mathbf{K}} R^{2}) }{2 (a_{\mathbf{K}}^2-c_{\mathbf{K}}^2)}+2 (a-a_{\mathbf{K}})=\dfrac{4 (a-a_{\mathbf{K}}) (a_{\mathbf{K}} a - c_{\mathbf{K}}^{2})-a_{\mathbf{K}} R^{2}}{2 (a_{\mathbf{K}}^2-c_{\mathbf{K}}^2)}.$$
Substituting these in each factor of \eqref{eq: factor 3-periodic}, we obtain the condition that one of the following two equations should be satisfied:
$$\dfrac{(4(a-a_{\mathbf{K}})^{2}-R^{2})^{2} c_{\mathbf{K}}^{2}}{4 (a_{\mathbf{K}}^2-c_{\mathbf{K}}^2)^{2}}-R^{2} \pm R \cdot \dfrac{4(a-a_{\mathbf{K}}) (a_{\mathbf{K}} a-c_{\mathbf{K}}^{2})-a_{\mathbf{K}} R^{2}}{a_{\mathbf{K}}^2-c_{\mathbf{K}}^2}=0.$$

We know from Prop \ref{prop: periodic orbits existence} that $3$-periodic orbits exist for $a > a_{\mathbf{K}}+c_{\mathbf{K}}$. This should not be optimal for the parameter range for the existence of $3$-periodic orbits, but it does not seem either easy or illuminating to identify the precise threshold. In Fig. \ref{fig:period3}, one can see a 3-periodic orbit including the foci-circle, the foci-caustic circle, and the tangent lines formed by the consecutive second Kepler foci.

\begin{figure}[htb]
\centering
\includegraphics[scale=0.8]{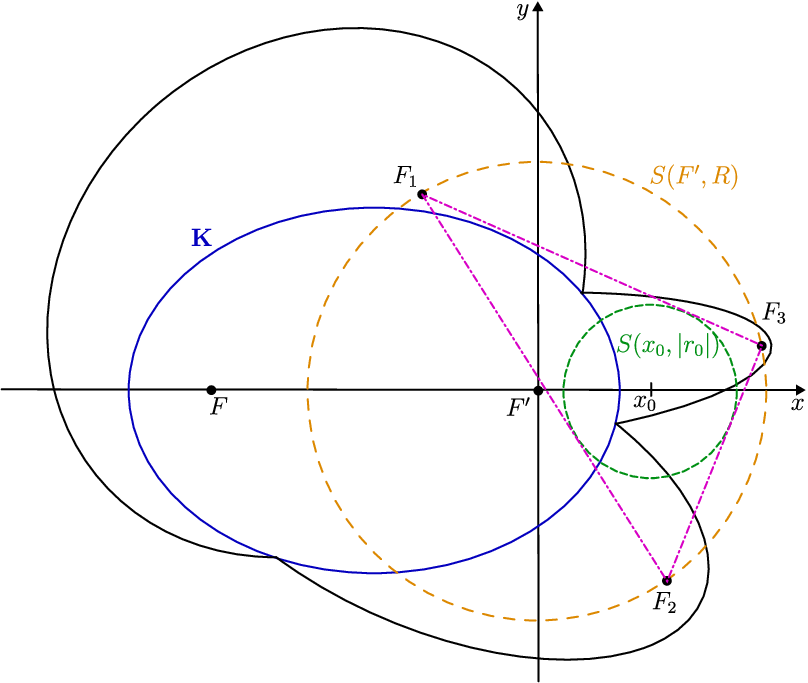}
\caption{Example for a 3-periodic configuration.}
\label{fig:period3}
\end{figure}

\subsubsection{$n=4$}

For $4$-periodic orbits the condition is $A_{3}=0$, which is equivalent to the following condition :

$$(R^{2}-x_{0}^{2}) (2 r_{0}^{2} (R^{2}+x_{0}^{2})-(R^{2}-x_{0}^{2})^{2})=0.$$

We obtain $4$-periodic orbits when one of the following two conditions holds
$$R=|x_{0}|,$$
$$2 r_{0}^{2} (R^{2}+x_{0}^{2})=(R^{2}-x_{0}^{2})^{2}.$$
The second equation is cubic in $R^{2},$ and it does not seem easy to make a detailed analysis.

The equation 
$$R=|x_{0}|,$$
can be solved explicitly as 
$$R=\frac{a_\mathbf{K}^2-c_\mathbf{K}^2}{c_\mathbf{K}}\cdot \left(1+\sqrt{1+\frac{4c_\mathbf{K}^2(a-a_\mathbf{K})^2}{(a_\mathbf{K}^2-c_\mathbf{K}^2)^2}}\right).$$

As the parameter range is given by
$$2 a>a_{\mathbf{K}}-c_{\mathbf{K}}>0,\,\, |2 a_{\mathbf{K}}-2 a| \le R \le 2 a + 2 c_{\mathbf{K}},$$
We have
$$|2 a_{\mathbf{K}}-2 a| \le \frac{a_\mathbf{K}^2-c_\mathbf{K}^2}{c_\mathbf{K}}\cdot \left(1+\sqrt{1+\frac{4 c_\mathbf{K}^2(a-a_\mathbf{K})^2}{(a_\mathbf{K}^2-c_\mathbf{K}^2)^2}}\right) \le 2 a + 2 c_{\mathbf{K}}.$$
For the inequality on the left, after squaring both sides, we easily see that it always holds. For the inequality on the right, after squaring both sides and simplifying the expressions, we obtain the much simplified equivalent form, giving a restriction for the shape of the reflection wall:
$$3\, c_\mathbf{K}\ge a_\mathbf{K}.$$

\section*{Acknowledgement} 
We thank the anonymous reviewers for their work and useful suggestions to improve the presentation of this article.

L.Z. is supported by DFG ZH 605-4/1 and Research Funds for Central Universities of China.

\bibliographystyle{cas-model2-names} 

\end{document}